\newtheorem{theorem}{Theorem}
\newtheorem{corollary}{Corollary}
\newtheorem{statement}{Statement}[section]
\newcommand{\Ind}{\mathop{\rm Ind}\nolimits}
\begin{document}
{\bf On isolated periodic points of diffeomorphisms with expanding attractors of codimension 1}

{Marina Barinova, HSE University}

\begin{abstract} 
In the paper we consider an $\Omega$-stable 3-diffeomorphism, chain recurrent set of which consists of isolated periodic points and expanding attractors of codimension 1, orientable or not. We estimate a minimum number of isolated periodic points using information about the structure of the attractors.
\end{abstract}

\section{Introduction and formulation of results}\label{sec:introduction}

Let $M^n$ be a closed smooth connected $n$-manifold with a metric $d$ and $f:M^n\to M^n$ be a diffeomorphism. An invariant compact set $\Lambda \subset M^n$ is called  \textit{hyperbolic} if there is a continuous $Df$-invariant splitting of the tangent bundle $T_\Lambda M^n$ into {\em stable} and {\em unstable subbundles}
$E^s_\Lambda\oplus E^u_\Lambda$, $\dim E^s_x + \dim E^u_x = n$ ($x\in \Lambda$) such that for natural $k$ and for some fixed $C_s>0$, $C_u>0$, $0<\lambda <1$
\[\begin{array}{ll}
    \Vert Df^k(v)\Vert \leq C_s\lambda ^k\Vert v\Vert, & \quad v\in E^s_{\Lambda},\\
    \Vert Df^{-k}(w)\Vert \leq C_u\lambda ^k\Vert w\Vert, & \quad w\in E^u_{\Lambda}.
\end{array}\]

Recall that {\it $\varepsilon$-chain of the length $m\in\mathbb N$}, joining points $x,y\in M^n$, for $f$ is called a collection of points $x=x_0,\dots,x_m=y$ such that  $d(f(x_{i-1}),x_{i})<\varepsilon$ for $1\leqslant i\leqslant m$. A point $x\in M^n$ is called {\it chain recurrent} for $f$ if for any $\varepsilon>0$ there exists $m$, depending on $\varepsilon>0$, and an $\varepsilon$-chain of length $m$, joining $x$ to itself. The set of all chain recurrent points is called {\it chain recurrent set} and is denoted by $\mathcal R_f$. 

Summarizing the results in \cite{PaMelo}, \cite{ShubStab}, \cite{SmaleOmega}, \cite{FrankeHyp} we know that the  hyperbolicity of $\mathcal R_f$  is equivalent to  $\Omega$-stability of $f$, that is small perturbations of $f$ preserve the chain recurrent set (equivalently non-wandering set $NW(f)$) structure. Thus, by \cite{Smale1967}, $\mathcal R_f$ consists of a finite number of pairwise disjoint sets, called \textit{basic sets}, each of which is compact, invariant, and topologically transitive (contains a dense orbit). If a basic set is a periodic orbit, then it is named \textit{trivial}. In the opposite case, it is \textit{non-trivial}. 
If $\dim\Lambda=n-1$ for some basic set $\Lambda$ then it is called a \textit{basic set of co-dimension 1}. 

A stable and unstable manifolds of a point $x\in\Lambda$, where $\Lambda$ is a basic set, can be defined in the following way:
\[\begin{array}{ll}
W^s_x=\{y\in M^n\mid \lim\limits_{k\to +\infty} d(f^k(x),f^k(y))=0\},\\
W^u_x=\{y\in M^n\mid \lim\limits_{k\to +\infty} d(f^{-k}(x),f^{-k}(y))=0\}.
\end{array}\]
By \cite{Smale1967}, $W^s_x$ and $W^u_x$ are injective immersions of $\mathbb{R}^q$ and $\mathbb{R}^{n-q}$, accordingly, for some $q\in\{0,1,\ldots,n\}$. For $r>0$ we denote by $W^{s}_{x,r}$ and $W^{u}_{x,r}$  the immersions of discs $D^q_r\subset\mathbb R^q$ and $D_r^{n-q}\subset\mathbb{R}^{n-q}$.
 
The concept of orientability can be introduced for a basic set $\Lambda$ with $\dim W^{s}_x=1$ or $\dim W^{u}_x=1$, $x\in\Lambda$. A non-trivial basic set $\Lambda$ is called {\it orientable} if for any point $x\in\Lambda$ and any fixed numbers $\alpha > 0,\,\beta> 0$ the intersection index\footnote{Let $J^k:\mathbb{R}^k\rightarrow{M}^3$ be immersions, $D^k$ be open balls of finite radii in $\mathbb{R}^k$, $k=1,2$.
Then the restrictions $J^k:D^k\rightarrow{M}$ are embeddings and their images $W^k=J^k(D^k)$ are smooth embedded submanifolds of the manifold $M^3$. Let $U^k$ be a  tubular neighborhood of $W^k$, which are images of embeddings in $M^3$ of spaces of $(3-k)$-dimensional vector bundles on $W^k$ \cite[Chapter~4, par.~5]{Hirsch}.
Since the balls $D^k$ are contractible, then these bundles are trivial and, hence,  $U^2\setminus{W}^2$ consists of two connected components $U^2_+$ and $U^2_-$. It allows to define a function $\sigma:U^2_+\cup U^2_-\rightarrow\mathbb{Z}$, such that $\sigma(x)=1$ if $x\in U^2_+$ and $\sigma(x)=0$ if $x\in U^2_-$. If  submanifolds $W^1$ and $W^2$ intersect transversally at a point $x=J^1(t)$, $t\in D^1$, then there exists a number $\delta>0$ such that $J^1((t-2\delta,t+2\delta))\subset U^2$. The number
\[\Ind_x(W^1,W^2)=\sigma(t+\delta)-\sigma(t-\delta)\]
 is called an {\it intersection index} of submanifolds $W^1$ and $W^2$ in the point $x$. Notice, that this definition does not require orientability of the manifold $M^3$.} $W^u_{x,\alpha}\cap W^s_{x,\beta}$ is the same at all intersection points
($+1$ or $-1$) \cite{Grines1975}. Otherwise, the basic set is called {\it non-orientable}.

A basic set $\Lambda$ is called an if it has a compact {\it trapping neighborhood} $U$, such that $f(U)\subset int~U$ and $\bigcap\limits_{n=1}^{+\infty}f^n(U)=\Lambda$. Each hyperbolic attractor consists of unstable manifolds of its points by \cite{Plykin1971}. If $\dim\Lambda=\dim W^u_x$, $x\in\Lambda$, for a hyperbolic attractor $\Lambda$, then it is \textit{expanding}.

Any co-dimension 1 expanding attractor $\Lambda$ divides its basin $W^s_\Lambda$ into a finite number of connected components. Every such a component $B$ determines 
{\it a bunch $b$} as the union of unstable manifolds of all periodic points from $\Lambda$ whose stable separatrix belongs to $B$. The number $k$ of such so-called \textit{boundary points} is finite and it is called {\it a degree of the bunch $b$} and $b$ is called {\it $k$-bunch} with the {\it basin} $B$. 

If  $n\geqslant 3$ then, by \cite{Plykin1984}[Theorem 2.1], any co-dimension 1 expanding attractor $\Lambda$  has 1-bunches or 2-bunches only. Moreover, the following fact takes place. 

\begin{statement}\label{prop:attractor_structure} If $\Lambda$ is a hyperbolic expanding attractor of co-dimension 1 of a diffeomorphism $f:M^n\to M^n$ given on a closed smooth $n$-manifold $M^n$, then $\Lambda$ is non-orientable iff it has an 1-bunch. 
\end{statement}

In the paper we consider diffeomorphisms every non-trivial basic set of which is an expanding attractor of codimension 1, and investigate the properties of such diffeomorphisms and the structure of their ambient manifolds. 
 The main result is the following theorem.

\begin{theorem}\label{theo:minimum_number}
Let $f:M^3\to M^3$ be an $\Omega$-stable diffeomorphism, given on a closed 3-manifold, $\Lambda$ be a non-empty set of non-trivial basic sets of $f$. If $\Lambda$ consists of expanding attractors of codimension 1 having a total of $k_1$ bunches of degree 1 and $k_2$ bunches of degree 2, then the number of points in the set $NW(f)\setminus\Lambda$ no less then $\frac32 k_1+k_2$ and this estimate is exact.
\end{theorem}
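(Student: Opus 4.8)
The plan is to split the argument into a lower bound and a matching construction, and throughout to pass from $f$ to a power $f^\ell$ so that every periodic point and every boundary point of every attractor is fixed; this changes none of the counted quantities. Since $f$ is $\Omega$-stable and every non-trivial basic set is assumed to lie in $\Lambda$, the set $NW(f)\setminus\Lambda$ is exactly the collection of trivial basic sets, i.e.\ the isolated periodic orbits, each of which is a hyperbolic source, sink or saddle. First I would record, via Statement~\ref{prop:attractor_structure}, that a $1$-bunch occurs precisely at a non-orientable gluing of the attractor, while $2$-bunches occur in the orientable situation; this dichotomy is what ultimately produces the two different weights $\tfrac32$ and $1$.

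The core local object is the \emph{free} stable separatrix of a boundary point. Each boundary point $p$ of $\Lambda$ has a one-dimensional stable manifold with two separatrices, one of which is free: it is disjoint from $\Lambda$ and issues into the basin component $B=B(b)$ associated with the bunch $b$ containing $p$. I would first show that the $\alpha$-limit set of such a free separatrix is a single trivial basic set lying in $NW(f)\setminus\Lambda$ (the separatrix is a ray contained in the unstable set of some basic set off $\Lambda$, so that set is a periodic source or saddle), and then analyse, bunch by bunch, the topology of the closed region $\overline{B}$ whose frontier consists of the bunch $b$ (a union of $k$ two-dimensional unstable manifolds of boundary points) together with the periodic orbits realised as these $\alpha$-limits. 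The key step is to prove that each such $\overline{B}$, being a trapping region for $f^{-1}$ bounded by the $k$ sheets of the bunch, must contain in its interior at least one repelling periodic point, and to control how many sheets and separatrices can share one and the same limit orbit.

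The weight count is then obtained as follows. For a $2$-bunch the two sheets $W^u_{p_1},W^u_{p_2}$ bound a region whose $f^{-1}$-dual repeller is forced to contain at least one source, and a single source suffices, giving the contribution $k_2$. The harder, and I expect decisive, part is the $1$-bunch case: here the single sheet of the bunch closes up on itself with an orientation-reversing twist, so the associated region is a twisted ($I$- or solid-Klein-bottle-type) bundle whose backward limit set cannot be a ball carrying one isolated source but must in addition contain a saddle orbit. Exploiting that these orientation-reversing twists must pair up consistently for the global attractor to be well defined, two $1$-bunches are forced to share one saddle while each still needs its own source, which yields $3$ periodic points per pair, i.e.\ the weight $\tfrac32$ per $1$-bunch and the total lower bound $\tfrac32 k_1+k_2$. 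Making the statement ``a twisted complementary region forces an extra saddle, shared in pairs'' precise---controlling the Conley index and the Euler characteristic of the complementary regions, and ruling out any cheaper configuration---is the main obstacle.

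Finally, to see that the estimate is exact I would construct explicit models realising equality. Starting from codimension-one Anosov-type automorphisms and performing $DA$ (derived-from-Anosov) surgeries, one builds orientable attractors whose $k_2$ $2$-bunches are each fed by exactly one source, and non-orientable attractors (using twisted surgeries, respectively suitable non-orientable ambient manifolds) whose $k_1$ $1$-bunches realise exactly $\tfrac32 k_1$ periodic points, with the sources and the shared saddles arranged as in the lower-bound analysis. Assembling these building blocks on a common $M^3$ gives, for each admissible pair $(k_1,k_2)$, a diffeomorphism with precisely $\tfrac32 k_1+k_2$ points in $NW(f)\setminus\Lambda$, establishing sharpness.
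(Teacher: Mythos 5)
Your outline has the right first moves (passing to a power so everything is fixed, identifying $NW(f)\setminus\Lambda$ with the trivial basic sets, using the orientability dichotomy of Statement~\ref{prop:attractor_structure}), but the central counting mechanism is wrong, not merely unfinished. Per-bunch counting cannot work, because the trivial basic sets forced by different bunches are \emph{shared}: all bunch basins lying in one connected component $M$ of $M^3\setminus\Lambda$ accumulate backwards onto one and the same dual repeller. In the extremal example realizing the bound for $2$-bunches, a single source together with $k_2-1$ saddles of index $1$ serves all $k_2$ bunches simultaneously, so your claim that each $2$-bunch forces its own source is false, and the total $k_2$ emerges only from a global count over the component. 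The paper's actual mechanism is exactly such a global count: it caps every bunch basin with an artificial sink (after passing to the orientation double cover over the non-orientable part of the complement, where the $1$-bunches live), obtaining a diffeomorphism with finite chain recurrent set on a closed manifold, and then combines the Lefschetz identity $C_3-C_2+C_1-C_0=0$, the connectivity inequality $C_1-C_0+1\geqslant 0$, the evenness of $C_1,C_2,C_3$ forced by the deck involution, and the inequality $C_1\geqslant C_0+l_1-2$ (proved via connectedness of $\bigcup_i cl(W^u_{\sigma_i})$ and its quotient by the involution), solving the resulting linear program. Nothing in your plan replaces this step, and you yourself flag it as the main obstacle.

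Worse, the extremal configuration on which your $\tfrac32$-count rests is topologically impossible. For a pair of $1$-bunches you posit two sources and one shared saddle. Run this through the regular system: the component $M$ containing the pair is non-orientable, its double cover is connected, and capping the two lifted basins gives $C_0=2$ sinks upstairs, with $C_j=2a_j$ where $a_j$ counts the points of index $j$ of $f|_M$. If the shared saddle had index $1$, Lefschetz would give $a_3-a_2+a_1=2-0+1=3\neq 1$; so it must have index $2$, forcing $a_1=0$, i.e.\ $C_1=0$, which violates $C_1\geqslant C_0-1=1$. The true extremal configuration per pair is one source, one saddle of index $1$ and one saddle of index $2$ (three points in total, so the number $\tfrac32 k_1$ is numerically right but for the wrong structural reason), realized in the paper by gluing $\mathbb RP^2\times[-1,1]$ to the trapping neighborhood and taking the product of a Morse--Smale diffeomorphism of $\mathbb RP^2$ with the expansion $x\mapsto 2x$. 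Consequently your realization plan, which promises to arrange ``the sources and the shared saddles as in the lower-bound analysis,'' would attempt to build a diffeomorphism that cannot exist; sharpness has to be established with the product models (and the ``flower'' model on the orientable side) instead.
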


\begin{corollary}\label{cor:3-sphere} If the non-wandering set $NW(f)$ of an $\Omega$-stable diffeomorphism $f:M^3\to M^3$ consists of 2-dimensional expanding attractors with $k$ bunches in total and $k$ isolated periodic points, then 
\begin{itemize}
\item each non-trivial attractor and $M^3$ are orientable;
\item $\dim W^u_p=1$ for every isolated saddle point $p$;
\item each connected component of the set $M^3\setminus\Lambda$ is homeomorphic to a punctured 3-sphere. 
\end{itemize}
\end{corollary}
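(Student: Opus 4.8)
The plan is to combine the exactness built into Theorem~\ref{theo:minimum_number} with a Morse-type handle analysis of the complement of the attractors. Write $k=k_1+k_2$ for the total number of bunches. By hypothesis the set $NW(f)\setminus\Lambda$ of isolated periodic points has exactly $k$ elements, so Theorem~\ref{theo:minimum_number} gives $k=k_1+k_2\ge\frac32k_1+k_2$, whence $k_1\le0$ and therefore $k_1=0$: every bunch has degree $2$, and moreover the estimate holds with equality. Statement~\ref{prop:attractor_structure} then shows at once that each non-trivial attractor is orientable, which is the first assertion of the first item. The remaining content of the corollary is that the isolated saddles are constrained in index and that the complementary regions are as simple as possible; both will fall out of a single counting identity once the complement is set up correctly.

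Next I would fix pairwise disjoint trapping neighbourhoods $U_i$ of the attractors, put $U=\bigcup_iU_i$ and $P=M^3\setminus\Int\,U$. Since $f(U)\subset\Int\,U$ one checks $f(P)\supseteq P$ and $M^3\setminus\Lambda=\bigcup_{n\ge0}f^n(P)$, a nested increasing union of copies of the compact manifold $P$; hence each connected component of $M^3\setminus\Lambda$ is homeomorphic to the interior of a connected component $P_0$ of $P$, and the components correspond. The key structural input, which I would extract from Plykin's description of bunches \cite{Plykin1984} together with orientability, is that $\partial U=\partial P$ consists of exactly one $2$-sphere per degree-$2$ bunch, so $\partial P$ is a disjoint union of $k$ spheres (a degree-$1$ bunch would instead contribute a one-sided non-orientable surface, but $k_1=0$). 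On $P$ the dynamics is gradient-like with the isolated periodic points as its only chain-recurrent points, so a self-indexing Lyapunov function yields a handle decomposition of each $P_0$ with $a$ handles of index $0$ (the sources, $\dim W^u=3$), $b$ of index $1$ (saddles with $\dim W^u=2$) and $c$ of index $2$ (saddles with $\dim W^u=1$), and no handles of index $3$; capping the boundary spheres with balls produces a closed orientable $3$-manifold $\widehat{P}_0$.

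The heart of the argument is then a two-line count. Summing over all components, exactness of the estimate gives $\sum(a+b+c)=k$ (the total number of isolated points), while $\partial P$ being a union of $k$ spheres gives $\sum(a-b+c)=\tfrac12\chi(\partial P)=k$ through the identity $\chi(P_0)=\tfrac12\chi(\partial P_0)$ for compact odd-dimensional manifolds; subtracting yields $\sum 2b=0$, so $b=0$. Thus there are no isolated saddles with $\dim W^u=2$, which is the second item. With no $1$-handles each connected component must contain a single $0$-handle, so $\widehat{P}_0$ is a connected, closed, orientable, simply connected $3$-manifold, hence $\widehat{P}_0\cong S^3$ by the Poincaré conjecture; removing the capping balls shows that $\Int\,P_0$, and therefore each component of $M^3\setminus\Lambda$, is a punctured $3$-sphere, which is the third item. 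Finally $M^3$ is obtained by gluing the orientable pieces $U$ (orientable because the attractors are) and $P$ along the boundary $2$-spheres, and such a gluing is always orientable, giving orientability of $M^3$ and completing the first item.

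I expect the main obstacle to be the structural input of the second paragraph: that in the orientable, degree-$2$ case the trapping neighbourhood can be chosen with boundary exactly a disjoint union of $2$-spheres, one per bunch, and that the maximal invariant set of $f$ in $P$ is precisely the isolated points, on which $f$ is genuinely Morse--Smale. This is where the codimension-$1$ expanding structure, Statement~\ref{prop:attractor_structure}, and the no-cycle condition coming from $\Omega$-stability must be used carefully. Once this normal form for $P$ is in hand, the index computation and the topological conclusions are short.
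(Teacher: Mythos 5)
Your opening reduction is exactly the paper's: exactness forces $k_1=0$, and Statement~\ref{prop:attractor_structure} then gives orientability of the attractors. For the second and third items you take a genuinely different route from the paper. The paper passes to the regular system of section~\ref{sec:transition_to_regular_system}, so that each complement component becomes a \emph{closed} manifold $\widetilde M$, then combines the Lefschetz identity $C_3-C_2+C_1-C_0=0$ with the connectivity inequality $C_1-C_0+1\geqslant 0$ of \cite{GrLauPo2009} as a linear program (equality forces $C_2=0$, $C_3=1$, $C_1=l_2-1$, $C_0=l_2$), and finally quotes \cite{OsPo2024} to conclude $\widetilde M\cong S^3$. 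You instead work on the compact complement $P$ and extract the index restriction from $\chi(P_0)=\tfrac12\chi(\partial P_0)$ plus the sphere count, getting $S^3$ from the Poincar\'e conjecture; this is more self-contained (no regularization, no appeal to \cite{OsPo2024}). Three points need patching but are harmless: (i) you may not exclude $3$-handles (isolated sinks of $f$) a priori; however, admitting $d$ of them the same subtraction gives $\sum 2(b+d)=0$, so both $b$ and $d$ vanish; (ii) the identification of a component of $M^3\setminus\Lambda$ with $\Int P_0$ needs $U\setminus \Int f(U)$ to be a collar $\partial U\times[0,1]$, which requires a choice of $U$ adapted to the product structure of the bunch basins from \cite{BaGrPo2023}, not an arbitrary trapping neighbourhood; (iii) ``gradient-like'' is not automatic for an $\Omega$-stable diffeomorphism (saddle connections are allowed), so the handle decomposition needs the Morse--Smale approximation that the paper invokes in a footnote, or a Conley-type filtration.

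The genuine gap is your final step, orientability of $M^3$. The principle ``gluing orientable $3$-manifolds along boundary $2$-spheres always yields an orientable manifold'' is false: glue two copies of $\mathbb S^2\times[0,1]$ along their two pairs of boundary spheres, one identification compatible with chosen orientations and the other not, and the result is $\mathbb S^2\widetilde{\times}\mathbb S^1$, which is non-orientable. This failure is not a marginal pathology here; it is precisely the phenomenon behind Theorem~\ref{theo:non-or_manifold}: orientable attractors can sit inside a non-orientable $M^3$, with non-orientability created by how the (orientable) pieces are assembled, and in that case the minimum number of isolated points jumps to $k+2$. So no argument from the topology of the pieces alone can close this step; you must use the dynamics once more. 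The paper's mechanism is dynamical: with orientable attractors, $W^s_\Lambda$ is a union of punctured $3$-tori by \cite{Plykin1984}, a non-orientable $M^3$ then yields a non-orientable component of the complement, and by \cite{OsPo2024} the corresponding closed model must carry saddles of \emph{both} indices, contradicting the count that your own argument (and the paper's) establishes ($b=0$, respectively $C_2=0$) when only $k$ isolated points are present. Equivalently, you may argue by contradiction with Theorem~\ref{theo:non-or_manifold}, whose proof is independent of the corollary: if $M^3$ were non-orientable, at least $k+2$ isolated points would be required. Either way, the missing input is this dynamical obstruction, not the gluing principle you invoked.
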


It is clear from сorollary \ref{cor:3-sphere}, that in a subclass of diffeomorphism with orientable $\Lambda$ and non-orientable $M^3$ the estimates from Theorem \ref{theo:minimum_number} can not be reached. For this case the following theorem takes place.

\begin{theorem}\label{theo:non-or_manifold}
Let an $\Omega$-stable diffeomorphism $f:M^3\to M^3$ be given on closed non-orientable manifold $M^3$ and a set of non-trivial basic sets consists of expanding orientable 2-dimensional attractors having a total of $k$ bunches, then the number of isolated periodic points is no less than $k+2$.
\end{theorem}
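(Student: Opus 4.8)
The plan is to combine the two results already in hand with a parity constraint coming from the Euler characteristic, so as to push the bound from $k$ up to $k+2$.

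First I would reduce to the case of $2$-bunches only. Since every attractor in $\Lambda$ is orientable, Statement~\ref{prop:attractor_structure} shows that none of them carries a $1$-bunch; hence in the notation of Theorem~\ref{theo:minimum_number} we have $k_1=0$ and $k_2=k$, and that theorem gives at once that the number $m$ of points of $NW(f)\setminus\Lambda$ satisfies $m\ge k$. Next I would exclude equality: if $m=k$, then $NW(f)$ consists precisely of the $2$-dimensional expanding attractors with $k$ bunches in total together with exactly $k$ isolated periodic points, which is the hypothesis of Corollary~\ref{cor:3-sphere}. That corollary forces $M^3$ to be orientable, contradicting the hypothesis. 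Therefore $m\ge k+1$.

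The heart of the argument is to exclude the value $m=k+1$ by a parity count. Each isolated periodic point $p$ enters the Euler--Poincar\'e (Morse-type) relation for the $\Omega$-stable diffeomorphism $f$ with the index $(-1)^{\dim W^u_p}$, so that, using a dynamically ordered filtration adapted to $\Lambda$,
\[
0=\chi(M^3)=\chi_\Lambda+\sum_{p}(-1)^{\dim W^u_p},
\]
where the sum runs over the isolated periodic points and $\chi_\Lambda$ denotes the total contribution of the attractors; here $\chi(M^3)=0$ because $M^3$ is a closed odd-dimensional manifold (orientable or not). Since each summand $(-1)^{\dim W^u_p}$ is odd, one has $\sum_{p}(-1)^{\dim W^u_p}\equiv m\pmod 2$, and hence $m\equiv -\chi_\Lambda\equiv \chi_\Lambda\pmod 2$. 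As $\chi_\Lambda$ is determined solely by the topological type of the attractors, i.e.\ by the number $k$ of $2$-bunches, its parity does not depend on the ambient manifold; comparing with the sharp orientable model of Theorem~\ref{theo:minimum_number}, in which the same $k$ bunches are realised with exactly $k$ isolated points, gives $\chi_\Lambda\equiv k\pmod 2$, and therefore $m\equiv k\pmod 2$. Together with $m\ge k+1$ this rules out $m=k+1$ and yields $m\ge k+2$.

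The step I expect to be the main obstacle is the justification of the displayed Euler--Poincar\'e relation in this setting, and in particular the claim that the parity of $\chi_\Lambda$ depends only on the bunch data. For a Morse--Smale system the relation is the classical one obtained by evaluating the Morse inequalities at $t=-1$; here one must upgrade it to an $\Omega$-stable diffeomorphism whose non-trivial basic sets are $2$-dimensional expanding attractors, and identify $\chi_\Lambda$ with the alternating sum of the Betti numbers of a trapping neighbourhood of $\Lambda$ (equivalently, with $\chi$ of the boundary surfaces cut out by the bunches). Verifying that this contribution is insensitive to the orientability of $M^3$, so that its parity is pinned down by the combinatorial type of $\Lambda$, is the technical core of the proof; once it is in place the parity conclusion, and hence the bound $k+2$, is immediate.
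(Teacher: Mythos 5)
Your strategy is sound and reaches the right bound, but it is genuinely different from the paper's proof, so let me compare. After the common reduction (orientable attractors have no $1$-bunches by Statement~\ref{prop:attractor_structure}, so $k_1=0$, $k_2=k$ and Theorem~\ref{theo:minimum_number} gives $m\geqslant k$), you exclude $m=k$ via Corollary~\ref{cor:3-sphere} and then exclude $m=k+1$ by a parity count. The paper instead \emph{localizes} the non-orientability: since the attractors are orientable, $W^s_\Lambda$ is a punctured $3$-torus by \cite{Plykin1984} and $\mathcal M^-=\varnothing$, so some connected component $M$ of $\mathcal M^+$, and hence its closed regularization $\widetilde M$, is non-orientable; by \cite{OsPo2024} a regular system on a closed non-orientable $3$-manifold must have saddles of both indices, i.e. $C_1>0$ and $C_2>0$, and re-running the linear program with these extra constraints yields at least $2l_2+2$ fixed points on $\widetilde M$, hence at least $l_2+2$ isolated points of $f$ in $M$; summing over components gives $k+2$. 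Your route is shorter and global, but note that its non-orientability input is imported through Corollary~\ref{cor:3-sphere}, whose proof the paper only sketches and which rests on the same ingredient from \cite{OsPo2024} (a minimal type-(1) configuration forces $\widetilde M\cong S^3$); the paper's route uses that ingredient directly and in exchange sees more structure, namely the forced existence of an index-$2$ saddle and the two optimal configurations realizing $2l_2+2$, which a parity argument cannot detect.

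Concerning the step you flag as the main obstacle: it closes immediately with the paper's own machinery from section~\ref{sec:transition_to_regular_system}, and in a cleaner way than your proposed ``comparison with the sharp orientable model'' (that comparison is the weakest link as stated, since the model realizes \emph{some} attractors with $k$ bunches, not the given ones, so ``same bunch data implies same $\chi_\Lambda$'' is precisely what would need proving). Instead, compute directly: with only $2$-bunches and $\mathcal M^-=\varnothing$ no double cover is needed, and the transition to a regular system replaces the attractor part by exactly $k$ hyperbolic sinks, one per bunch basin. On each closed component the equality $C_3-C_2+C_1-C_0=0$ holds (the paper invokes it via the Lefschetz formula; for the Morse--Smale approximation it follows because the unstable manifolds partition the closed manifold into cells, so compactly supported Euler characteristics add up to $\chi(\widetilde M)=0$). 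Reducing mod $2$ and subtracting the $l_2$ added sinks per component gives that the number of isolated points of $f$ in each component is congruent to $l_2$ modulo $2$, hence $m\equiv k\pmod 2$ globally --- exactly your parity claim, with no need to discuss how $\chi_\Lambda$ depends on the ambient manifold.
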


A simple structure of the orbit space of the restriction of $f$ to the set $W^s_\Lambda\setminus\Lambda$ gives us a way to obtain an $\Omega$-stable system without non-trivial basic sets from considered one. We will describe a procedure of transition from a cascade with codimension 1 expanding attractors to a corresponding regular system in a section \ref{sec:transition_to_regular_system}. 
A section \ref{sec:proof_of_theo} gives a proof of estimates from theorem \ref{theo:minimum_number} and theorem \ref{theo:non-or_manifold}. A proof of corollary \ref{cor:3-sphere} is directly follows from the proof of theorem \ref{theo:minimum_number}. Finally, in the section \ref{sec:realization} we show that estimates are exact.

\section{Transition to a regular system}\label{sec:transition_to_regular_system}

In this section we will show how to obtain a system $\widetilde f:\widetilde M^3\to\widetilde M^3$ with regular dynamics from a system $f:M^3\to M^3$ with codimension 1 expanding attractors and isolated periodic points. 

\begin{figure}[h]
\centering
\begin{subfigure}{0.48\textwidth}
    \includegraphics[width=\textwidth]{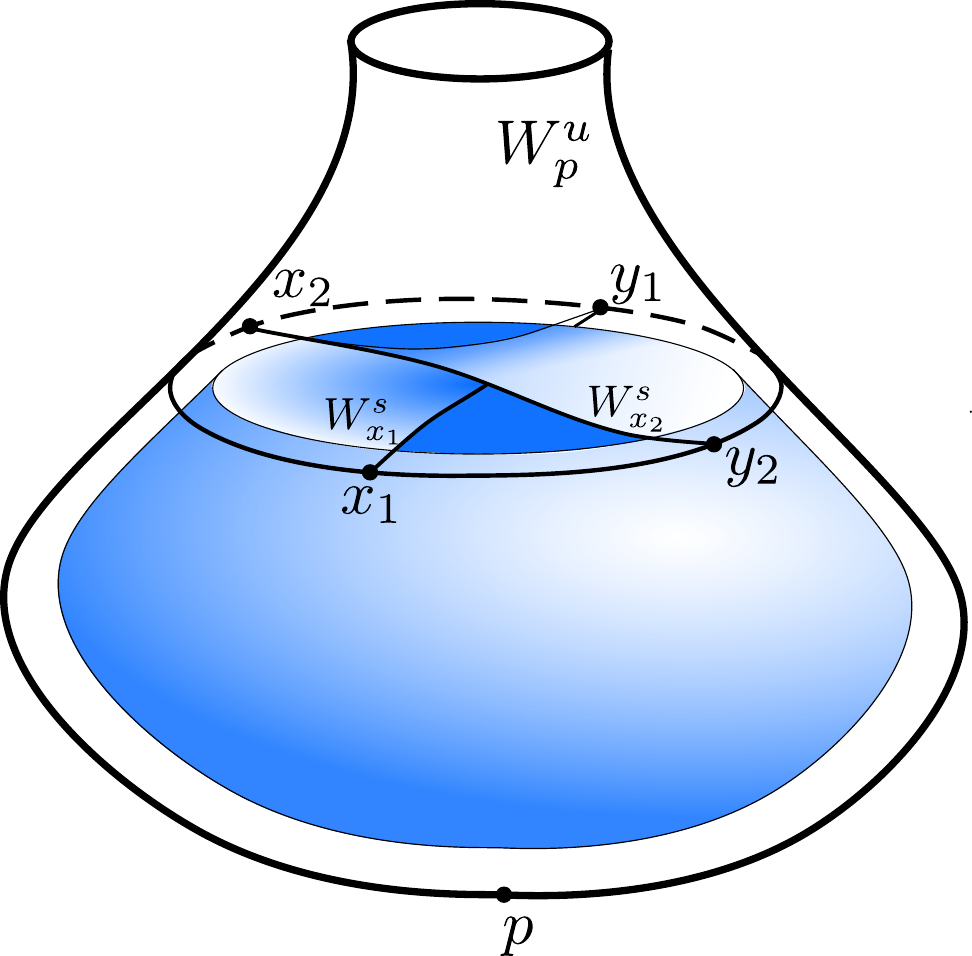}
    \caption{Bunch of degree 1}
    \label{fig:first}
\end{subfigure}
\hfill
\begin{subfigure}{0.48\textwidth}
    \includegraphics[width=\textwidth]{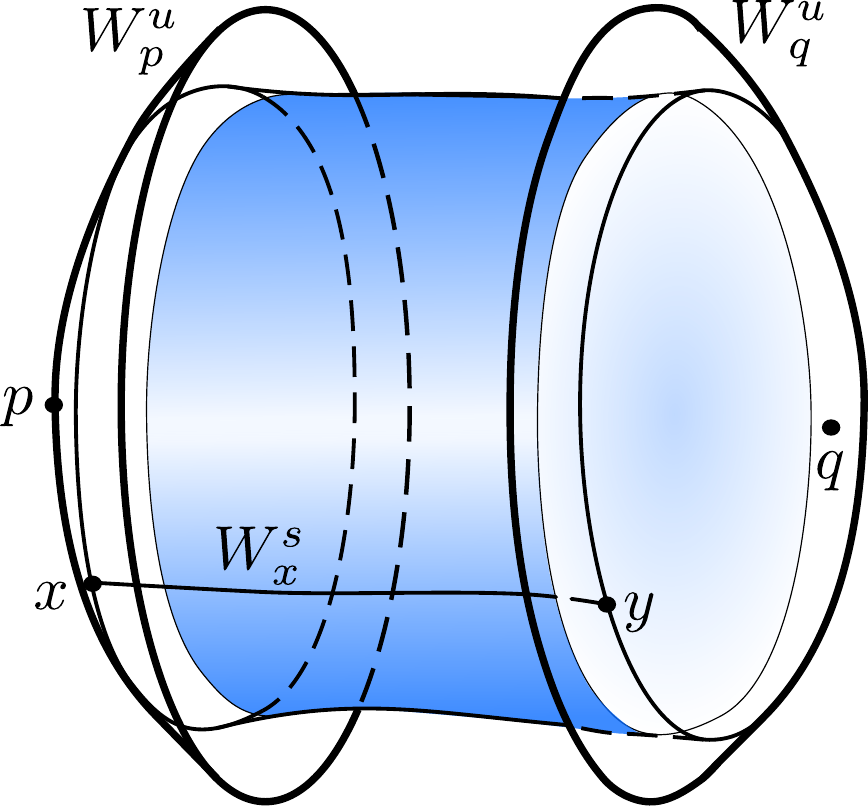}
    \caption{Bunch of degree 2}
    \label{fig:second}
\end{subfigure}
       
\caption{Components of the boundary of a trapping neighborhood near bunches of different degrees}
\label{fig:bunches}
\end{figure}

Let $\Lambda$ be a set of non-trivial attractors of $f$ and $U_\Lambda$ be its trapping neighborhood. The boundary of $U_\Lambda$ consists of $k_1$ copies of $\mathbb RP^2$ and $k_2$ copies of $\mathbb S^2$ (\cite{BaGrPo2023}[lemma 2.2]) as in figure \ref{fig:bunches}. 
Let  $M^3\setminus 
int\,U_\Lambda=M^+\sqcup M^-$, where $M^+$ and $M^-$ are compact subsets of $M^3$ (one of them can be empty) such that $\partial M^+$ consists of $k^+$ 2-spheres, $\partial M^-$ consists of $k^-_1>0$ copies of $\mathbb RP^2$ and $k^-_2$ copies of $\mathbb S^2$. Notice, that each connected component of $M^-$ is non-orientable \cite{MeZhu2002} and hence there exists a double cover $\pi:\widehat M^-\to M^-$ \cite{BaPoYa2024}, such that $\partial \widehat M^-$ consists of $\hat k^-=k^-_1+2k^-_2$ 2-spheres. There is the following division of $\widetilde M^3$ on disjoint closed submanifolds $\widetilde M^+$ and $\widetilde M^-$:

\begin{itemize}
\item $\widetilde M^+=M^+\cup_{h^+} (D\times\mathbb Z_{k^+})$, where $D=\{(x,y,z)\in\mathbb R^3\mid x^2+y^2+z^2\leqslant 1\}$, $h^+:\partial M^+\to\partial (D\times\mathbb Z_{k^+})$ is a diffeomorphism;

\item $\widetilde M^-=\widehat M^-\cup_{h^-} (D\times\mathbb Z_{\hat k^-})$, $h^-:\partial \widehat M^-\to\partial (D\times \mathbb Z_{\hat k^-})$ is a diffeomorphism.  
\end{itemize}

Let us introduce the following designations: 
\begin{itemize}
\item $\mathcal M^+=\bigcup\limits_{m=1}^{+\infty}f^m(M^+)$, $\mathcal M^-=\bigcup\limits_{m=1}^{+\infty}f^m(M^-)$;
\item $\pi:\widehat{\mathcal M}^-\to{\mathcal M}^-$ is a double cover of $\mathcal M^-$;
\item $\widehat{\mathcal M}={\mathcal M}^+\cup\mathcal {\widehat M}^-$, $k=k^++\hat k^-$;
\item $\widehat f:\widehat{\mathcal M}\to\widehat{\mathcal M}$ is a diffeomorphism such that $\widehat f|_{\mathcal M^+}=f|_{\mathcal M^+}$ and $\widehat f|_{{\mathcal M}^-}$ is a lift of $f|_{{\mathcal M}^-}$.
\end{itemize}
Let also $O$ be the centre of the disk $D$.

\begin{theorem}
There exists a diffeomorphism $\widetilde f:\widetilde M^3\to\widetilde M^3$, which has $k$ sinks at the points $O\times \mathbb Z_{k}\subset\widetilde M^3$ and $\widetilde f|_{\widetilde M^3\setminus (O\times \mathbb Z_{k})}$ is topologically conjugated with $\widehat f$.
\end{theorem}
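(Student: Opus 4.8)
The plan is to build $\widetilde f$ from two local models---a standard sink on each capping ball and the given dynamics of $\widehat f$ on $\widehat{\mathcal M}$---and then to check that the two models agree along the gluing spheres and that the resulting map is conjugate to $\widehat f$ off the centres $O\times\mathbb Z_k$. First I would establish the normal form of $\widehat f$ near each boundary $2$-sphere of $\widehat{\mathcal M}$. Since $\Lambda$ is a hyperbolic attractor with trapping neighbourhood $U_\Lambda$ satisfying $f(U_\Lambda)\subset\mathrm{int}\,U_\Lambda$, each boundary sphere $S$ of $M^+$ (and, after lifting $f|_{\mathcal M^-}$, of $\widehat M^-$) is carried by $\widehat f$ to a sphere $\widehat f(S)$ lying strictly closer to $\Lambda$, and the closed shell between $S$ and $\widehat f(S)$ is a fundamental domain diffeomorphic to $\mathbb S^2\times[0,1]$. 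Iterating, the end of $\widehat{\mathcal M}$ accumulating on $\Lambda$ is diffeomorphic to $\mathbb S^2\times[0,+\infty)$ on which $\widehat f$ acts as a shift toward $+\infty$, with monodromy (first-return) map $g\colon\mathbb S^2\to\mathbb S^2$. This product structure is exactly the ``simple orbit space'' of $f|_{W^s_\Lambda\setminus\Lambda}$ that the construction exploits.

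Next I would place a standard sink on each ball $D$: choose $\widetilde f|_D$ fixing $O$ as a hyperbolic attracting fixed point whose basin is all of $\mathrm{int}\,D$, so that on $D\setminus\{O\}\cong\mathbb S^2\times(0,1]$ the map acts as a shift toward $s=0$ (the removed centre), again with some monodromy on $\mathbb S^2$. I would then prescribe the collar data of this sink near $\partial D$ so that, transported by the gluing diffeomorphisms $h^+$ and $h^-$, it matches the shell map of $\widehat f$ near the corresponding boundary sphere of $\widehat{\mathcal M}$; concretely I would define $\widetilde f$ on the outermost shell $\mathbb S^2\times[0,1]$ of $D\setminus\{O\}$ to agree with the fundamental-domain map of $\widehat f$ under $h^\pm$, and extend inward over the remaining shells by equivariance, smoothing the result in a neighbourhood of $\partial D$.

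With $\widetilde f|_D$ fixed, I would take $\widetilde f$ to coincide with $\widehat f$ on the rest of $\widetilde M^3$ under the identification that replaces each attractor end $\mathbb S^2\times[0,+\infty)$ of $\widehat{\mathcal M}$ by the punctured-ball end $\mathbb S^2\times(0,1]$ of $D\setminus\{O\}$. Because both ends carry shift dynamics on a product $\mathbb S^2\times(\text{interval})$ with matched collar data, reversing the interval parameter produces a homeomorphism $\phi\colon\widetilde M^3\setminus(O\times\mathbb Z_k)\to\widehat{\mathcal M}$ with $\phi\circ\widetilde f=\widehat f\circ\phi$: on the compact cores $M^+$ and $\widehat M^-$ the map $\phi$ is the identity of the construction, where $\widetilde f=\widehat f$ by definition, and the matched monodromy extends the intertwining across the glued shells. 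This yields the required topological conjugacy and identifies $O\times\mathbb Z_k$ as precisely the set of $k$ sinks.

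Finally I would verify the global bookkeeping: that $f$ preserves the decomposition $M^+\sqcup M^-$ so that $\widehat f$ is genuinely defined on $\widehat{\mathcal M}$; that the double cover $\pi$ turns the $k_1^-$ boundary copies of $\mathbb{RP}^2$ into $k_1^-$ spheres and doubles the $k_2^-$ copies of $\mathbb S^2$, giving $\hat k^-=k_1^-+2k_2^-$ and hence $k=k^++\hat k^-$ capping spheres; and that $\widehat f|_{\mathcal M^-}$ admits the lift guaranteed by \cite{BaPoYa2024}. The step I expect to be the main obstacle is the gluing itself: arranging that the contracting collar of the sink can be made to coincide \emph{smoothly} with the contracting collar of $\widehat f$ along $\partial D$, so that $\widetilde f$ is an honest diffeomorphism rather than only a homeomorphism, while simultaneously preserving the conjugacy to $\widehat f$ away from the centres. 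The remainder is routine manipulation of fundamental domains and of the orientation double cover.
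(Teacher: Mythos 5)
Your overall strategy---exploit the product structure $\mathbb S^2\times\mathbb R$ of the ends of $\widehat{\mathcal M}$ and cap each end with a sink---is the same idea that drives the paper's proof, but the step you yourself flag as the main obstacle is a genuine gap, and your proposed fix does not close it. You build $\widetilde f$ on $D\setminus\{O\}\cong\mathbb S^2\times(0,1]$ by copying the fundamental-domain map of $\widehat f$ onto the outermost shell and ``extending inward over the remaining shells by equivariance.'' Concretely, this extension has the form $\widetilde f(x)=2^{-n}F(2^{n}x)$ on the $n$-th shell, where $F$ is the transported fundamental-domain map. Writing $x=2^{-n}y$ with $y$ in the fundamental shell, differentiability of $\widetilde f$ at $O$ with derivative $A$ would force $F(y)=Ay+o(1)$ as $n\to\infty$ uniformly in $y$; since $F(y)-Ay$ is a fixed function of $y$, this requires $F$ to be \emph{exactly linear} on the fundamental shell. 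So unless the shell map of $\widehat f$ happens to be linear, your equivariant extension is only a homeomorphism at $O$ (smooth away from it), not a diffeomorphism, and certainly not a hyperbolic sink; smoothing ``in a neighbourhood of $\partial D$'' cannot repair a failure of differentiability at the centre. Since the theorem asserts that $\widetilde f$ is a diffeomorphism of $\widetilde M^3$ with sinks at $O\times\mathbb Z_k$, this is exactly the point that needs an argument.

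The paper closes this gap by running the construction in the opposite direction. Instead of extending $\widehat f$ inward over a capping ball, it starts from the standard \emph{linear} models $g_i(x,y,z,t)=(\pm\frac x2,\frac y2,\frac z2,\,t+1\bmod m_i)$ on $\mathbb R^3\times\mathbb Z_{m_i}$, which are smooth and hyperbolic at the origins by fiat, and glues $\widehat{\mathcal M}$ onto these models along the \emph{open} bunch basins $\widehat{\mathcal B}_i\cong\mathbb S^2\times\mathbb R$, using as gluing map a diffeomorphism $h_i$ that conjugates $\widehat f|_{\widehat{\mathcal B}_i}$ with $g_i$ restricted to the punctured model $(\mathbb R^3\setminus O)\times\mathbb Z_{m_i}$. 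The existence of such an $h_i$ is exactly what the orbit-space comparison supplies: both $\widehat f$ on a bunch basin and the linear sink on its punctured basin have orbit space $\mathbb S^2\times\mathbb S^1$ or $\mathbb S^2\widetilde\times S^1$, according to whether the return map preserves or reverses orientation, and a diffeomorphism of orbit spaces respecting the deck group lifts to a smooth conjugacy. Because the gluing map is itself a conjugacy on an open overlap, the two local definitions of $\widetilde f$ agree wherever both are defined, so $\widetilde f$ is automatically a well-defined diffeomorphism with hyperbolic (periodic) sinks at the centres, and the conjugacy with $\widehat f$ off the centres is built in---no collar matching, no smoothing, and no differentiability check at $O$ is ever needed. (The paper also groups the basins into $f$-periodic families of periods $m_i$ with $m_1+\cdots+m_l=k$ and uses the cyclic models accordingly; your ball-by-ball capping glosses over the fact that $\widehat f$ may permute the ends, so the capped centres are periodic rather than fixed.) To salvage your boundary-gluing version you would have to replace the equivariant extension by precisely such a conjugacy onto a linear model, which is in substance the paper's proof.
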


\begin{proof}

Let ${\mathcal B}^+$ and ${\mathcal B}^-$ be sets of the bunch basins in the sets $\mathcal M^+$ and $\mathcal M^-$ correspondingly. Let also $\widehat {\mathcal B}^-={\pi}^{-1}({\mathcal B}^-)$, and $\widehat {\mathcal B} = {\mathcal B}^+\cup \widehat {\mathcal B}^-$. Since bunch basins are periodic, then there exists a division of the set $\widehat {\mathcal B}$ on subsets $\widehat {\mathcal B}_i$, $i=1,\ldots,l$, each of which has a minimum natural number $m_i$ such that the set $\widehat{\mathcal B}_i=\bigcup\limits_{j=1}^{m_i}f^{j}(\widehat B_i)$, where $\widehat B_i$ is some connected component of $\widehat {\mathcal B}$. Then $m_1+\cdots+m_l=k$. It follows from \cite{BaGrPo2023} that each  $\widehat B_i$ is diffeomorphic to $\mathbb S^2\times \mathbb R$ and hence the orbit space of $f|_{\widehat {\mathcal B}_i}$ is diffeomorphic to $\mathbb S^2 \times\mathbb S^1$, if $\widehat f^{m_i}|_{\widehat {B}_i}$ preserves orientation, or $\mathbb S^2\widetilde\times S^1$, if $\widehat f^{m_i}|_{\widehat {B}_i}$ reverses one. Notice, that periodic hyperbolic sinks have the same orbit spaces in their basins. 

Let $g_{i}:\mathbb R^3\times\mathbb Z_{m_i}\to\mathbb R^3\times\mathbb Z_{m_i}$ be a diffeomorphism with $m_i$ sinks at the origins $O\times\mathbb Z_{m_i}$, $g_i=(\frac x2,\,\frac y2,\,\frac z2,\,t+1\mod m_i)$, if $\widehat f^{m_i}|_{\widehat {\mathcal B}_i}$ preserves orientation, and $g_i=(-\frac x2,\,\frac y2,\,\frac z2,\,t+1\mod m_i)$ otherwise. Let also $h_{i}:\widehat{\mathcal B}_{i}\to(\mathbb R^3\setminus O)\times \mathbb Z_{m_i}$ be diffeomorphisms, conjugated $\widehat f|_{\widehat{\mathcal B}_{i}}$ with $g_{i}|_{(\mathbb R^3\setminus O)\times \mathbb Z_{m_i}}$. Then diffeomorphisms $g:\mathbb R^3\times\mathbb Z_{k}\to\mathbb R^3\times\mathbb Z_{k}$ and $h:\widehat B\to(\mathbb R^3\setminus O)\times\mathbb Z_{k}$ can be composed of $g_i$ and $h_{i}$. Moreover, $h$ can be chosen in such a way that $h(U_\Lambda)=\mathbb S^2\times\mathbb Z_{k}$, where $\mathbb S^2\subset\mathbb R^3$ is a standard 2-sphere. Then $\widetilde M^3 = \widehat{\mathcal M}\sqcup_{h} (\mathbb R^3\times\mathbb Z_{k})$ with a natural projection $q:\widehat {\mathcal M} \sqcup(\mathbb R^3\times\mathbb Z_{k})\to\widetilde M^3$. The desired diffeomorphism $\widetilde f$ coincides with $q \widehat f(q|_{\widehat {\mathcal M}})^{-1}$ on the set $q(\widehat {\mathcal M})$ and with $q g(q|_{\mathbb R^3\times\mathbb Z_{k}})^{-1}$ on the sets $q(\mathbb R^3\times\mathbb Z_{k})$.
Notice, that by the construction $\widetilde f$ has $k$ sinks more then $\widehat f$.
\end{proof}

\section{Low estimate of trivial basic sets number}\label{sec:proof_of_theo}
In this section we will prove the estimate from theorem \ref{theo:minimum_number}. 
Let $f:M^3\to M^3$ be an $\Omega$-stable diffeomorphism, given on a closed connected 3-manifold. 
Everywhere below in this section we will assume that all isolated periodic points and also boundary periodic points are fixed, because it does not affect the lower estimates: an appropriate degree of initial system satisfies these property and has the same number of isolated periodic points. Let $R_f=\Lambda\cup p_1\cup p_2\cup\ldots\cup p_m$, where $\Lambda$ is a union of expanding attractors of codimension 1 with $k_1$ bunches of degree 1 and $k_2$ bunches of degree 2 in total and $p_i$ is a fixed point, $i\in\{1,2,\ldots,m\}$. Below we will prove that $m\geqslant \frac32 k_1+k_2$.

\begin{proof}

Via the transition to a regular system, described in section \ref{sec:transition_to_regular_system}, we will obtain an $\Omega$-stable diffeomorphism $\widetilde f:\widetilde M^3\to\widetilde M^3$ with a finite chain-recurrent set on a closed manifold $\widetilde M^3$. Notice, that all chain-recurrent points of $\widetilde f$ are fixed.
Let $M$ be a connected component of $\mathcal M=M^3\setminus\Lambda$. Notice that $M$ is $f$-invariant. There exists a connected component $\widetilde M$ of $\widetilde M^3$ corresponded to $M$.  

Let us denote a number of 1- and 2-bunch basins, contained in $M$, as $l_{1}$ and $l_{2}$ correspondingly. $M$, and hence $\widetilde M$, can be one of 2 types (see section \ref{sec:transition_to_regular_system}): (1) $M\subset {\mathcal M}^+$ and (2) $M\subset {\mathcal M}^-$. In the first case $l_1=0$ and $\widetilde f|_{\widetilde M}$ has $l_2$ sinks more than $\widehat f|_{M}$. In the second case $l_{1}>0$ and even and $\widetilde f|_{\widetilde M}$ has $l_1+2l_2$ sinks more than $\widehat f|_{{\pi}^{-1}(M)}$.

Let $C_{j}$, $j=0,1,2,3$, be a number of fixed points $p$ of $\widetilde f|_{\widetilde M}$ with $\dim W^u_p=j$, for example, $C_{0}$ be a number of sinks.  
Also $\widetilde f|_{\widetilde M}$ has at least 1 source, since it is $\Omega$-stable. Then by the Lefschetz formula the alternating sum of $C_{j}$ is equal to 0: \[C_{3}-C_{2}+C_{1}-C_{0}=0.\]
At the same time since $\widetilde M$ is connected, then $C_{1}-C_{0}+1\geqslant 0$ \cite{GrLauPo2009}. If $\widetilde M$ of the type (1) then $C_{0}\geqslant l_2>0$ and there is no additional restrictions. The finding of the minimum of the sum $C_{0}+C_{1}+C_{2}+C_{3}$ is a linear programming problem, it can be solved by a simplex method. Then the minimum of fixed points of $\widetilde f|_{\widetilde M}$ can be reached if $C_{3}=1$, $C_{2}=0$, $C_{1}=l_{2}-1$, $C_{0}=l_{2}$. Therefore $f|_{M}$ has at least $l_{2}$ isolated fixed points if $\widetilde M$ of the type (1). It follows from \cite{OsPo2024} that $\widetilde M$ is homeomorphic to $S^3$ in this case.

If $\widetilde M$ of the type (2) then $C_{1}$, $C_{2}$, and $C_{3}$ are even, because isolated periodic points of $f|_{M}$ is doubled in this case. Also $C_{0}\geqslant l_1+2l_{2}>0$. Without loss of generality we suppose that 1-dimensional se\-pa\-rat\-rices of saddles do not intersect\footnote{Each $\Omega$-stable diffeomorphism with finite chain-recurrent set has $\varepsilon$-close Morse-Smale diffeomorphism with the same amount of chain-recurrent points. Therefore we can consider this Morse-Smale diffeomorphism instead of initial one to calculate desired estimates.}. Then we can arrange points in the non-wandering set of $\widetilde f|_{\widetilde M}$ agreed with Smale relation\footnote{Let $\Lambda_1$ and $\Lambda_2$ be basic sets of an $\Omega$-stable diffeomorphism $f:M\to M$. $\Lambda_1\prec\Lambda_2$ if $W^s_{\Lambda_1}\cap W^u_{\Lambda_2}\neq\varnothing$.}. Moreover, the order can be chosen in such a way that each saddle of index 1 comes before all saddles of index 2. Thus we have $\omega_{1}\prec\ldots\prec\omega_{_{C_0}}\prec\sigma_{1}\prec\ldots\prec\sigma_{_{C_1}}\prec\beta_{1}\prec\ldots\prec\beta_{_{C_2}}\prec\alpha_{1}\prec\ldots\prec\alpha_{_{C_3}}$, where each $\omega_{i}$ is a sink, each $\sigma_{i}$ is a saddle of index 1, each $\beta_{i}$ is a saddle of index 2, and each $\alpha_{i}$ is a source. 

It follows from the paper \cite{GMPZ-global} that a set $\mathcal A = \bigcup\limits_{i=1}^{{c_1}} cl(W^u_{\sigma_{i}})$ is 1-dimensional and connected. The double cover $\pi$ induces an involution $\varphi$ on the set $\mathcal A\setminus{(\omega_{1}\cup\ldots\cup\omega_{_{C_0}})}$ and can be extended by continuity on the whole $\mathcal A$. Moreover, a set of fixed points of the extended involution $\varphi$ coincides with the set of sinks corresponded to 1-bunches.

Let $\mathcal A^*=\mathcal A /_\varphi$. Since a natural projection is a continuous map, then connectedness of $\mathcal A$ implies the connectedness of $\mathcal A^*$. $\mathcal A^*$ contains $(C_0+l_1)/2$ sinks and hence it is needed at least $(C_0+l_1)/2-1$ saddles of index 1. Therefore $\mathcal A$ contains at least $(C_0+l_1-2)$ saddles of index 1, i.e. $C_1\geqslant C_0+l_1 -2$.

Let us solve a linear programming task for this case:

\begin{equation*}
\begin{array}{l}
C_{3}-C_{2}+C_{1}-C_{0}=0, \\
C_{0}\geqslant l_1+2l_2, \\
C_1-C_0\geqslant l_1 -2, \\
C_3\geqslant 2.
\end{array}
\end{equation*}

The optimal values are: $C_0=l_1+2l_2$, $C_1=2l_1+2l_2-2$, $C_2=l_1$, and $C_3=2$.
Then there are at least $l_1+l_2-1$ saddles of index 1, $l_1/2$ saddles of index 2, and 1 source at the component $M$. 

Summing over all connected components of $\mathcal M$ we obtain that $f$ has at least $\frac32 k_1 + k_2$ isolated periodic points: at least $s$ sources, $(k_1 + k_2 - s)$ saddles of index 1, and $k_1/2$ saddles of index 2, where $s$ is a number of connected components of $\mathcal M$.
\end{proof}

Below we will prove theorem \ref{theo:non-or_manifold}.

\begin{proof}

If $M^3$ is non-orientable, but $\Lambda$ contains only orientable attractors, then by \cite{Plykin1984} $W^s_{\Lambda}$ is homeomorphic to a punctured 3-torus, $\mathcal M^-=\varnothing$, and there exists a non-orientable connected component $M$ of the set $\mathcal M^+$. Then corresponded manifold $\widetilde M$ is also non-orientable, and $\widetilde f|_{\widetilde M}$ has saddles of different indices \cite{OsPo2024}, that is $C_{2}>0$ and $C_{1}>0$. There are two optimal possibilities: 1 source, 1 saddle of index 2, $l_2$ saddles of index 1, and $l_2$ sinks or 2 source, 1 saddle of index 2, $l_2 - 1$ saddles of index 1, and $l_2$ sinks, --- for the both possibilities a total number of points in non-wandering set of $\widetilde f|_{\widetilde M}$ is $2l_2+2$, so $f|_{M}$ has at least $l_2+2$ isolated periodic points.

\end{proof}

\section{Achievability of the estimates}\label{sec:realization}

Realizations of diffeomorphisms with a minimum number of trivial basic sets are given in this section, i.e. we will prove the second part of theorems \ref{theo:minimum_number} and \ref{theo:non-or_manifold}.
First of all, we will answer on a question: how to obtain an $\Omega$-stable cascade $f:M^3\to M^3$ with a set of expanding attractors of codimension 1 $\Lambda$ with $k_1\geqslant 0$ bunches of degree 1 and $k_2\geqslant 0$ bunches of degree 2 in total ($k_1+k_2>0$) and $\frac32 k_1+k_2$ periodic points outside of $\Lambda$.

Let $f$ be a diffeomorphism of considered class with the following properties:

\begin{itemize}
\item all bunches and isolated periodic points are fixed;
\item if $k_2>0$, than $M^+$ is connected and has $k_2$ boundary components, otherwise $M^+$ is empty;
\item if $k_1>0$, than each non-trivial attractor has 1-bunches and $M^-$ has $k_1/2$ connected components, each of which is homeomorphic to $\mathbb RP^2\times[-1,1]$.
\end{itemize}

Corresponding regular system $\widetilde f|_{\widetilde M^+}$ for the set $M^+$ realizing the minimum can be as in figure \ref{fig:ms-flower}. It has $k_2$ sinks, $k_2-1$ saddles, and $1$ source. 
\begin{figure}[h]
  \centering
  \includegraphics[width=.5\linewidth]{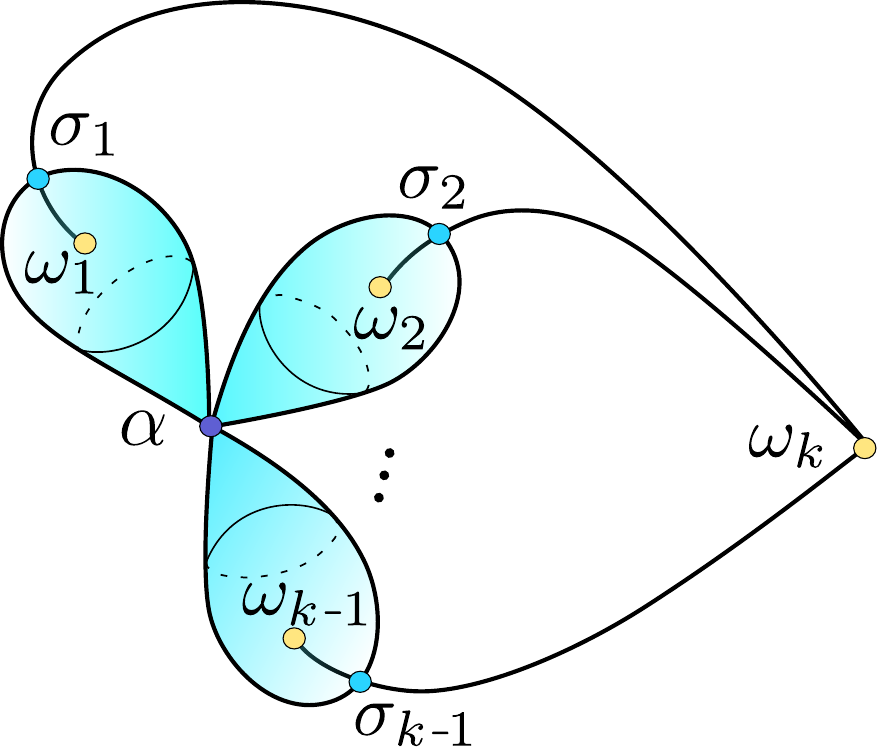}
  \caption{Morse-Smale system for $\mathcal M^+$, realizing low estimates}
  \label{fig:ms-flower}
\end{figure} 

If $k_1>0$, all bunches of degree 1 are divided into pairs in such a way that after gluing the cylinders $\mathbb RP^2\times[-1,1]$ to a trapping neighborhood of $\Lambda$ we will obtain a connected manifold $M^3$. Let the restriction $f|_{M}$ of the desired diffeomorphism $f$ on each connected component $M$ of $\mathcal M^-$ is topologically conjugated to a diffeomorphism $(g_1\times g_2)$, where $g_1:\mathbb RP^2\to\mathbb RP^2$ is on figure \ref{fig:system_on_rp2} and $g_2:\mathbb R\to\mathbb R$ such that $g_2(x)=2x$.
\begin{figure}[ht!]
  \centering
  \includegraphics[width=.4\linewidth]{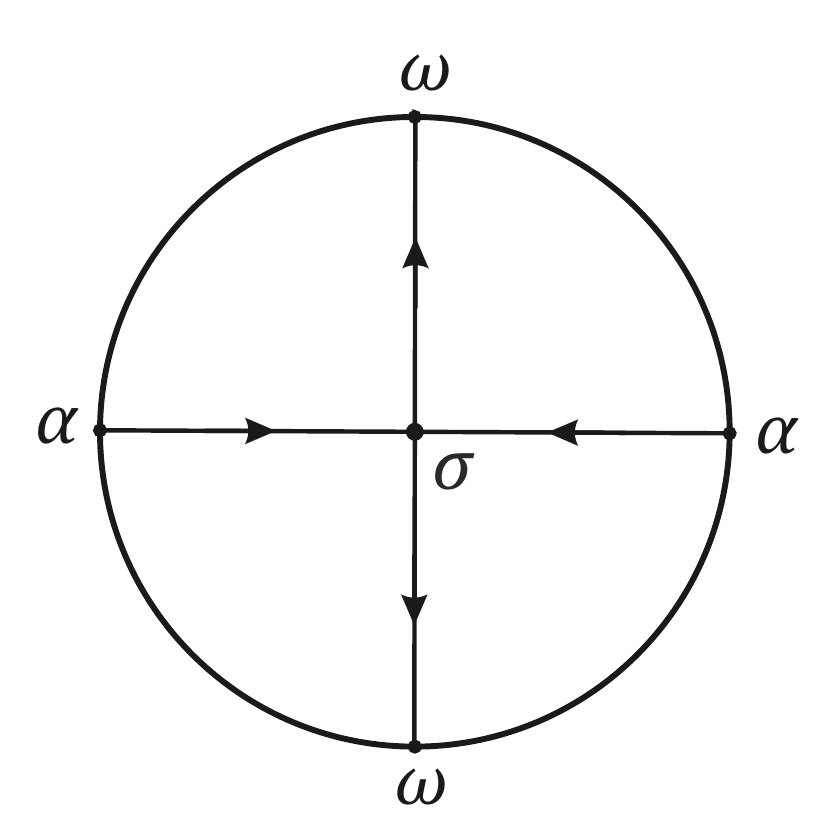}
  \caption{Morse-Smale system on $\mathbb RP^2$}
  \label{fig:system_on_rp2}
\end{figure} 

Achievability of the estimate from theorem \ref{theo:non-or_manifold} we will show with a diffeomorphism $f:M^3\to M^3$ with the following properties:

\begin{itemize}
\item $f$ has only 1 non-trivial attractor $\Lambda$, which is connected and has $k_2$ bunches of degree 2;
\item all bunches and isolated periodic points of $f$ is fixed;
\item a set $M^+$ consists of $k_2$ connected components.
\end{itemize}

Let a corresponded regular system $\widetilde f:\widetilde M^3\to\widetilde M^3$ be given on $\mathbb S^3\times\mathbb Z_{k_2-1}\sqcup \mathbb S^2\widetilde{\times}\mathbb S^1$, the dynamics on each 3-sphere be ``sink-source`` and on the $\mathbb S^2\widetilde{\times}\mathbb S^1$ be as on the figure \ref{fig:MS-non-orientable-sink}. Therefore $f:M^3\to M^3$ has exactly $k_2+2$: $k_2$ sources and 2 saddles of different indices, - isolated chain recurrent points and $M^3$ is non-orientable.

\begin{figure}[ht!]
  \centering
  \includegraphics[width=.5\linewidth]{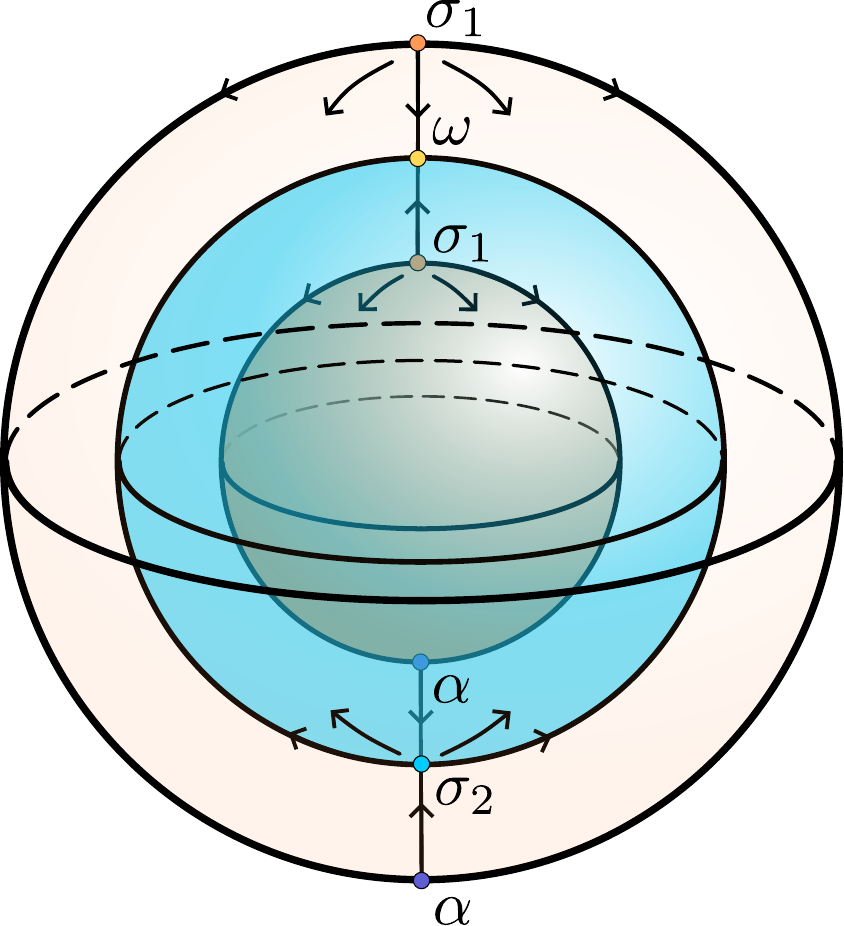}
  \caption{Morse-Smale system on $\mathbb S^2\widetilde{\times}\mathbb S^1$}
  \label{fig:MS-non-orientable-sink}
\end{figure}

\textit{Acknowledgments} This article is an output of a research project implemented as part of the Basic Research Program at the National Research University Higher School of Economics (HSE University).

\bibliographystyle{ieeetr}
\bibliography{biblio}
\end{document}